\newtheorem{lemma}{Lemma}
\newtheorem{theorem}{Theorem}
\newtheorem*{theorem*}{Theorem}
\newtheorem{cor}{Corollary}
\newtheorem{prop}{Proposition}
\newcommand{\mbb}[1]{\mathbb{#1}}
\newcommand{\mc}[1]{\mathcal{#1}}
\begin{document}
\title{Examples of badly approximable vectors over number fields}
\author[Robert Hines]{Robert Hines}
\address{
Department of Mathematics, University of Colorado,
Campus Box 395, Boulder, Colorado 80309-0395}
\email{robert.hines@colorado.edu}
\begin{abstract}
We consider approximation of vectors $\mathbf{z}\in F\otimes\mbb{R}\cong\mbb{R}^r\times\mbb{C}^s$ by elements of a number field $F$ and construct examples of badly approximable vectors.  These examples come from compact subspaces of $SL_2(\mc{O}_F)\backslash SL_2(F\otimes\mbb{R})$ naturally associated to (totally indefinite, anisotropic) $F$-rational binary quadratic and Hermitian forms, a generalization of the well-known fact that quadratic irrationals are badly approximable over $\mbb{Q}$.
\end{abstract}
\maketitle
\section*{Introduction}
A number field $F$ of degree $r+2s$ embeds naturally in the product of its Archimedean completions $F\otimes_{\mbb{Q}}\mbb{R}\cong\mbb{R}^r\times\mbb{C}^s$.  Given a vector $\mathbf{z}=(z_1,\ldots,z_{r+s})\in\mbb{R}^r\times\mbb{C}^s$, one can ask how well $\mathbf{z}$ can be approximated by elements of $F$. Following \cite{EGL} and \cite{KL}, we will measure the quality of approximation by
$$
\max_i\{|q_i|\}\max_i\{|q_iz_i-p_i|\}, \ p/q\in F, \ p,q\in\mc{O}_F,
$$
where $p_i$ and $q_i$ are the images of $p$ and $q$ under $r+s$ inequivalent embeddings $\sigma_i:F\to\mbb{C}$ and $|\cdot|$ is the usual absolute value in $\mbb{R}$ or $\mbb{C}$.  The measure above is meaningful in the sense that all irrational vectors have infinitely many ``good'' approximations as demonstrated by the following Dirichlet-type theorem.

\begin{theorem}[cf. \cite{Q}, Theorem 1]
There is a constant $C$ depending only on $F$ such that for any $\mathbf{z}\not\in F$
$$
\max_i\{|q_i|\}\max_i\{|q_iz_i-p_i|\}\leq C
$$
has infinitely many solutions $p/q\in F$.
\end{theorem}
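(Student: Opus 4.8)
The plan is to prove this by the geometry of numbers, generalizing the one‑dimensional Dirichlet argument. Write $n=r+2s=[F:\mbb{Q}]$ and view $\mc{O}_F$ as a lattice in $F\otimes\mbb{R}\cong\mbb{R}^r\times\mbb{C}^s$ of covolume $2^{-s}\sqrt{|d_F|}$, where $d_F$ is the discriminant. The key object is the ``approximation map''
\[
\Phi_{\mathbf{z}}\colon(F\otimes\mbb{R})^2\to(F\otimes\mbb{R})^2,\qquad (p,q)\mapsto\bigl((q_iz_i-p_i)_i,\,(q_i)_i\bigr),
\]
which is $\mbb{R}$-linear and, place by place, acts by a shear of determinant $\pm1$ over $\mbb{R}$ or $\mbb{C}$; since a complex determinant $\zeta$ contributes $|\zeta|^2$ to the real Jacobian, $\Phi_{\mathbf{z}}$ is volume preserving. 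Hence $\Phi_{\mathbf{z}}(\mc{O}_F^2)$ is a lattice of covolume $(2^{-s}\sqrt{|d_F|})^2=2^{-2s}|d_F|$.

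First I would apply Minkowski's theorem to the symmetric convex body
\[
B_{\rho,\tau}=\{(u,v):\ |u_i|\le\rho,\ |v_i|\le\tau\ \text{for all }i\}\subset(F\otimes\mbb{R})^2,
\]
whose volume is $(2^r\pi^s\rho^n)(2^r\pi^s\tau^n)=2^{2r}\pi^{2s}(\rho\tau)^n$. Minkowski's theorem produces a nonzero point of $\Phi_{\mathbf{z}}(\mc{O}_F^2)$ in $B_{\rho,\tau}$ as soon as this volume exceeds $2^{2n}\cdot2^{-2s}|d_F|$, i.e. as soon as $\rho\tau>C_0:=((4/\pi^2)^s|d_F|)^{1/n}$, a quantity depending only on $F$. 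Unwinding, for any such $\rho,\tau$ there exist $p,q\in\mc{O}_F$, not both zero, with $\max_i|q_iz_i-p_i|\le\rho$ and $\max_i|q_i|\le\tau$, so that the product in the statement is at most $\rho\tau$. Taking $C$ to be any fixed number exceeding $C_0$ (or $C=C_0$ via the closed-body form of Minkowski) and setting $\tau=C/\rho$ gives a solution of the displayed inequality for every $\rho>0$.

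Next I would rule out $q=0$ and extract infinitely many solutions by letting $\rho\to0$. If $q=0$ then $u_i=-p_i$, and $\max_i|p_i|\le\rho<1$ would force $|N_{F/\mbb{Q}}(p)|\le\rho^n<1$, impossible for a nonzero algebraic integer; hence $q\ne0$ whenever $\rho<1$. Now as $\rho\to0$ the approximation quality $\max_i|q_iz_i-p_i|\le\rho$ tends to $0$. Because $\mathbf{z}\notin F$, no single pair $(p,q)$ with $q\ne0$ has quality $0$ (that would say $z_i=\sigma_i(p/q)$ at every place), so these solutions cannot eventually be constant and we obtain infinitely many distinct pairs $(p,q)$ satisfying the bound.

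The main obstacle I anticipate is upgrading ``infinitely many pairs'' to ``infinitely many distinct ratios $p/q\in F$.'' For a ratio $a=p/q$ with $z_i\ne\sigma_i(a)$ at every place, the quality equals $\max_i|q_i|\,|z_i-\sigma_i(a)|$ and is bounded below by a positive constant (the shortest-vector bound for the denominator lattice), so such ratios cannot account for qualities tending to $0$. The only dangerous ratios are the ``aligned'' ones for which some coordinate $z_i$ equals a conjugate $\sigma_i(a)$; since each embedding is injective, there are at most $r+s$ of these, and for them the unit action can produce infinitely many good pairs representing a single element (multiplying $q$ by a unit whose $i$-th conjugate shrinks). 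Thus the delicate step is to show that genuinely new field elements keep appearing --- e.g. by choosing the box $B_{\rho,\tau}$ to avoid the finitely many aligned directions, or by a direct counting argument as in \cite{Q} --- after which infinitude of distinct solutions $p/q$ follows. I expect this endgame, rather than the geometry-of-numbers estimate, to be where the real work lies.
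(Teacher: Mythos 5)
The paper never proves this theorem --- it is quoted from Qu\^{e}me \cite{Q} as background --- so there is no internal proof to compare against, and your argument has to be judged on its own terms. Its geometry-of-numbers core is correct and complete: $\Phi_{\mathbf{z}}$ is unimodular, the covolume $2^{-2s}|d_F|$ and the box volume $2^{2r}\pi^{2s}(\rho\tau)^n$ are computed correctly, so Minkowski gives for every $\rho$ a nonzero $(p,q)\in\mc{O}_F^2$ with $\max_i|q_iz_i-p_i|\le\rho$ and $\max_i|q_i|\le C/\rho$; the norm argument rules out $q=0$ once $\rho<1$; and since $\mathbf{z}\notin F$ no fixed pair has $\max_i|q_iz_i-p_i|=0$, so letting $\rho\to0$ yields infinitely many distinct pairs of quality at most $C$.

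The problem is your final paragraph, but not in the way you expect: the upgrade to infinitely many distinct field elements $p/q$ is not a gap to be filled, because under the stated hypothesis $\mathbf{z}\notin F$ it is \emph{false}, so no choice of box or counting argument can deliver it. Concretely, let $F$ be totally real cubic and $\mathbf{z}=(0,0,z_3)$ with $z_3\neq0$; then $\mathbf{z}\notin F$. If a pair with $p\neq0$ has quality $\le C$, then with $M=\max_i|q_i|\ge1$ one gets $\max(|p_1|,|p_2|)\le C/M$, hence $|p_3|\ge M^2/C^2$ from $|N_{F/\mbb{Q}}(p)|\ge1$, while $|p_3|\le M|z_3|+C$; these inequalities force $M$, and then $q$ and $p$, into finite sets. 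So for every $C$ all but finitely many solution pairs represent the single ratio $0$, and there really are infinitely many such pairs: take $(0,q)$ with $q$ a unit whose third conjugate shrinks and whose first two conjugates are balanced, so the quality is roughly $|q_3|^{1/2}|z_3|\to0$ --- exactly the unit-orbit phenomenon you describe. Consequently the statement must be read as asserting infinitely many \emph{pairs} $(p,q)\in\mc{O}_F\times(\mc{O}_F\setminus\{0\})$, a reading consistent with the paper's definition of badly approximable, where ``for all $p/q\in F$'' necessarily quantifies over representatives since the quantity depends on $(p,q)$ and not on the ratio alone. Under that reading your first three paragraphs already constitute a complete proof, and the ``endgame'' you anticipated as the real work should simply be deleted. (Only under the stronger hypothesis that $z_i\notin\sigma_i(F)$ for every $i$ does the distinct-ratio version hold, and then it follows at once from your own lower bound for non-aligned ratios.)
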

In what follows, we will give some explicit examples showing that the above theorem fails if the constant is decreased, i.e. there are \textit{badly approximable} vectors, $\mathbf{z}$ such that there exists $C'>0$ with
$$
\max_i\{|q_i|\}\max_i\{|q_iz_i-p_i|\}\geq C'
$$
for all $p/q\in F$.  Our examples come from ``obvious'' compact totally geodesic subspaces of
$$
SL_2(\mc{O}_F)\backslash(\mbb{H}^2)^r\times(\mbb{H}^3)^s,
$$
where $\mbb{H}^n$ is hyperbolic $n$-space.  Namely, these examples are associated to totally indefinite anisotropic $F$-rational binary quadratic forms over any number field (Proposition \ref{prop:quad}) and totally indefinite anisotropic $F$-rational binary Hermitian forms over CM fields (Proposition \ref{prop:herm}).  Among the examples are algebraic vectors, i.e. vectors whose entries generate a non-trivial finite extension of $F$, including non-quadratic vectors in the CM case (Corollary \ref{cor:alg}).  This is interesting in light of the following variation on Roth's theorem (which can be deduced from the Subspace Theorem for number fields).
\begin{theorem}[cf. \cite{S1}, Theorem 3]
Suppose  $\mathbf{z}\not\in F$ has algebraic coordinates. Then for all $\epsilon>0$, there exists a constant $C'>0$ depending on $\mathbf{z}$ and $\epsilon$ such that
$$
\max_i\{|q_i|\}^{1+\epsilon}\max_i\{|q_iz_i-p_i|\}\geq C'
$$
for all $p/q\in F$.
\end{theorem}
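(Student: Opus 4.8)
The plan is to deduce this from the Subspace Theorem for number fields (in Schlickewei's form, which permits algebraic coefficients in the forms), showing that the unusually good approximations are confined to finitely many rational directions and then ruling them out one direction at a time. Fix $\epsilon>0$ and write $Q=\max_i|q_i|$, $D=\max_i|q_iz_i-p_i|$, $d=[F:\mbb Q]=r+2s$, and let $n_i\in\{1,2\}$ be the local degree at the place $v_i$ of $\sigma_i$, so that $\sum_i n_i=d$. Every quantity $Q^{1+\epsilon}D$ is strictly positive (positivity of $D$ uses $\mathbf z\notin F$: no $p/q\in F$ can satisfy $\sigma_i(p/q)=z_i$ for all $i$). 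Hence it suffices to prove that only finitely many pairs $(p,q)\in\mc O_F^2$ with $q\neq0$ satisfy $Q^{1+\epsilon}D<1$; the minimum of $Q^{1+\epsilon}D$ over that finite set, truncated at $1$, then furnishes the constant $C'$.

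So suppose $Q^{1+\epsilon}D<1$, i.e. $D<Q^{-1-\epsilon}$, and apply the Subspace Theorem to $\mathbf x=(p,q)$ with $n=2$, taking $S$ to be the archimedean places and, at each $v_i\in S$, the two independent forms $L_{1,v_i}=Y$ and $L_{2,v_i}=X-z_iY$ (their coefficients are algebraic because the $z_i$ are). Writing $H=H(p,q)$ for the projective height and $\mc H=\prod_i\max(|p_i|,|q_i|)^{n_i}$ for its archimedean part (so $H\le\mc H$ since $p,q\in\mc O_F$), the relevant normalized double product equals
$$
\prod_i\frac{\|L_{1,v_i}(\mathbf x)\|_{v_i}\,\|L_{2,v_i}(\mathbf x)\|_{v_i}}{\|\mathbf x\|_{v_i}^2}=\frac{|N_{F/\mbb Q}(q)|\,\prod_i|q_iz_i-p_i|^{n_i}}{\mc H^2}.
$$
Now $|N_{F/\mbb Q}(q)|\le Q^d$, the bound $|q_iz_i-p_i|\le D$ gives $\prod_i|q_iz_i-p_i|^{n_i}\le D^d$, and $\mc H\ge H$; moreover for such good approximations $|p_i|\le|z_i|Q+D\ll Q$, whence $\mc H\le C_1^dQ^d$ and therefore $Q\gg H^{1/d}$. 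Combining these, the double product is $\le(QD)^d/H^2<Q^{-\epsilon d}/H^2\ll H^{-2-\epsilon}$ once $H$ is large, which lies below the threshold $H^{-2-\delta}$ (take $\delta=\epsilon/2$) of the Subspace Theorem for $n=2$. The theorem thus confines all but finitely many of these $(p,q)$ to finitely many lines through the origin in $F^2$.

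Each such line consists of the $F$-multiples of a fixed $(p_0,q_0)$; containing a point with $q\neq0$, it has $q_0\neq0$ and corresponds to a single $\beta=p_0/q_0\in F$. On the line $\beta$ one has $q_iz_i-p_i=q_i(z_i-\sigma_i(\beta))$, so $Q^{1+\epsilon}D=Q^{1+\epsilon}\max_i|q_i|\,w_i$ with $w_i=|z_i-\sigma_i(\beta)|$. The essential input is that $w_i>0$ for every $i$, i.e. that no coordinate $z_i$ lies in $\sigma_i(F)$; this is guaranteed by the hypothesis, since the entries of $\mathbf z$ are conjugates of a single algebraic number generating a nontrivial extension of $F$, so an equality $z_i=\sigma_i(\beta)$ with $\beta\in F$ would force that number into $F$, contradicting $\mathbf z\notin F$. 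Granting $w_i>0$ for all $i$ we get $\max_i|q_i|\,w_i\ge(\min_i w_i)\,Q$, hence $Q^{1+\epsilon}D\gg Q^{2+\epsilon}\to\infty$ along the line; so only finitely many $q$ on each of the finitely many lines satisfy $Q^{1+\epsilon}D<1$, proving the desired finiteness.

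The main obstacle is precisely this passage from the Subspace Theorem to the bound. The theorem only asserts that the unusually good approximations are trapped in finitely many rational directions, and this is genuinely insufficient by itself: within one direction there can be infinitely many pairs driving $Q^{1+\epsilon}D$ to $0$ unless the coordinates of $\mathbf z$ are irrational place by place. One sees the statement fail for, e.g., $\mathbf z=(\alpha,0,\dots,0)$, where taking $p=0$ and letting the conjugates of $q$ concentrate away from the first place sends the product to $0$; this is why the per-direction lower bound, and with it the precise meaning of ``algebraic coordinates generating a nontrivial extension,'' is indispensable. Verifying that the good approximations really beat the sharp threshold $H^{-2-\delta}$ (where both the exponent $1+\epsilon$ and the inequality $Q\gg H^{1/d}$ enter) is the other place where care is required; algebraicity of the $z_i$ is used twice, once to license algebraic coefficients in the forms and once to guarantee $w_i>0$.
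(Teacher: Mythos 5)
The paper does not actually prove this statement: it is quoted (``cf.'') from Schmidt \cite{S1}, with the parenthetical remark in the Introduction that it can be deduced from the Subspace Theorem for number fields. Your proposal carries out exactly that indicated deduction, and its mechanics are sound: the forms $Y$ and $X-z_iY$ at each archimedean place, the identity for the normalized double product, the estimates $|N_{F/\mathbb{Q}}(q)|\le Q^d$, $\prod_i|q_iz_i-p_i|^{n_i}\le D^d$ and $Q\gg H^{1/d}$, the comparison with the threshold $H^{-2-\delta}$, and the reduction to finitely many $F$-rational lines followed by the per-line bound $Q^{1+\epsilon}D\ge(\min_i w_i)\,Q^{2+\epsilon}$ are all correct. (One phrasing slip: pairs of bounded projective height are not ``finitely many pairs'' but finitely many lines; since your line analysis disposes of entire lines, nothing is lost.)

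The substantive point is your observation that the statement as printed is defective: the hypothesis $\mathbf{z}\notin F$ (i.e.\ $\mathbf{z}$ is not the diagonal image of an element of $F$) is weaker than what the pairwise formulation requires, namely $z_i\notin\sigma_i(F)$ for \emph{every} $i$. Your counterexample is genuine, with one caveat: it needs the unit group to be large enough. For $F$ totally real cubic and $\mathbf{z}=(\alpha,0,0)$, taking $p=0$ and units $q$ with $(\log|q_1|,\log|q_2|,\log|q_3|)\approx(-2T,T,T)$ gives $\max_i\{|q_i|\}^{1+\epsilon}\max_i\{|q_iz_i-p_i|\}\asymp e^{(\epsilon-1)T}\to0$ for $\epsilon<1$; but for $F$ real quadratic or CM quartic no such concentration of units is possible, consistent with the paper's remark that the two notions of approximation coincide there (\cite{KL}, Prop.\ A.2). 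Where your write-up goes astray is the repair: reading the hypothesis as ``the $z_i$ are conjugates of a single algebraic number'' is much too strong and, in context, self-defeating---the paper cites this theorem precisely to contrast with its algebraic badly approximable examples in Corollary \ref{cor:alg}, whose coordinates involve independently chosen algebraic numbers $\alpha_i$ and are generally \emph{not} conjugates of one element over $F$. Your proof never uses that reading anyway: all it needs is $w_i=|z_i-\sigma_i(\beta)|>0$ for every $i$ and every $\beta\in F$, i.e.\ exactly place-wise irrationality, and that is the hypothesis the theorem should carry. It also holds automatically for the paper's examples: a zero of an anisotropic form $J$ satisfies $J_i(\sigma_i(\beta),1)=\sigma_i(J(\beta,1))\neq0$ for all $\beta\in F$, so $z_i\neq\sigma_i(\beta)$.
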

The ``linear forms'' notion of badly approximable defined above implies
$$
\max_i\{|z_i-p_i/q_i|\}\geq\frac{C'}{\max_i\{|q_i|^2\}} \text{ for all } p/q\in F,
$$
which is perhaps the first notion of badly approximable that comes to mind.  The two notions are equivalent when $F$ has only one infinite place (i.e. $F=\mbb{Q}$ or $\mbb{Q}(\sqrt{-d})$) since the absolute value is multiplicative.  (The notions are also equivalent for real quadratic and complex quartic $F$, \cite{KL} Proposition A.2.)  However, in larger number fields it seems that some choice must be made and the linear choice appears naturally in the proof of Theorem \ref{thm:dani}.

Simultaneous approximation in this sense seems natural and has been explored by various authors, e.g. \cite{EGL}, \cite{Ha}, \cite{KL}, \cite{Q}, \cite{S1},\cite{B}.  Among known facts, we note that the set of badly approximable vectors has Lebesgue measure zero, full Hausdorff dimension, and is even ``winning'' when restricted to curves and various fractals in $\mbb{R}^r\times\mbb{C}^s$ (\cite{EGL}, \cite{KL}, \cite{EK}).

Finally, we note that there is an elementary proof that our examples are badly approximable, along the lines of Liouville's theorem, which we give at the end of the paper.
\section*{Notation and outline}
To fix some notation and conventions, we identify $SL_2(\mbb{C})/SU_2(\mbb{C})$ with the upper half-space model of three-dimensional hyperbolic space, $\mbb{H}^3=\{\zeta=z+tj : z\in\mbb{C},0<t\in\mbb{R}\}$ (inside the Hamiltonians), via the action
$$
g\cdot\zeta=(a\zeta+b)(c\zeta+d)^{-1}, \ 
g=\left(
\begin{array}{cc}
a&b\\
c&d\\
\end{array}
\right)\in SL_2(\mbb{C}),
$$
and $SL_2(\mbb{R})/SO_2(\mbb{R})$ with the upper half-plane, $\mbb{H}^2=\{z=x+iy\in\mbb{C} : y>0\}$, via the action
$$
g\cdot z=\frac{az+b}{cz+d}, \ 
g=\left(
\begin{array}{cc}
a&b\\
c&d\\
\end{array}
\right)\in SL_2(\mbb{R}).
$$

Let $F$ be a number field of degree $r+2s$, where $r$ and $s$ are the number of real and complex places respectively, and let $n=r+s$.  We are interested in the locally symmetric space $\Gamma\backslash G/K$ where $\Gamma=SL_2(\mc{O}_F)$ acts by left multiplication on $G=SL_2(F\otimes\mbb{R})\cong SL_2(\mbb{R})^r\times SL_2(\mbb{C})^s$ and $K\cong SO_2(\mbb{R})^r\times SU_2(\mbb{C})^s$ is a maximal compact subgroup of $G$.  Let $\{\sigma_i\}_{i=1}^n$ be the set of real embeddings along with a choice of one complex embedding from each conjugate pair, so that $\Gamma$ acts diagonally via $\{\sigma_i\}_{i}$ in the isomorphism above.

We will consider products of lines $\prod_i L_i\subseteq(\mbb{H}^2)^r\times(\mbb{H}^3)^s$ for any $F$ (respectively products of planes $\prod_i P_i\subseteq(\mbb{H}^3)^n$ in the CM case) whose image modulo $\Gamma$ is compact, along with geodesic trajectories $\Omega_{\mathbf{z}}\cdot K\subseteq(\mbb{H}^2)^r\times(\mbb{H}^3)^s$ ``aimed'' at points $\mathbf{z}=(z_i)_i\in\mbb{R}^r\times\mbb{C}^s\subseteq(\partial\mathbb{H}^2)^r\times(\partial\mathbb{H}^3)^s$, where
$$
\Omega_{\mathbf{z}}=\left\{\left(
\left(
\begin{array}{cc}
1&z_1\\
0&1\\
\end{array}
\right)
\left(
\begin{array}{cc}
e^{-t}&0\\
0&e^{t}\\
\end{array}
\right),\ldots,
\left(
\begin{array}{cc}
1&z_n\\
0&1\\
\end{array}
\right)
\left(
\begin{array}{cc}
e^{-t}&0\\
0&e^{t}\\
\end{array}
\right)
\right) : 0\leq t\in\mbb{R}
\right\}.
$$
If each $z_i\in\partial L_i$ (respectively $z_i\in\partial P_i$) then the trajectory $\Omega_{\mathbf{z}}$ is asymptotic to $\prod_i L_i$ (respectively $\prod_i P_i$) and therefore bounded modulo $\Gamma$.  The Dani correspondence of the next section tells us such $\mathbf{z}$ are badly approximable.

\section*{Dani correspondence}
The results of this section are taken from \cite{EGL}.  Theorem \ref{thm:dani} is a variation of \cite{D}, Theorem 2.20, tailored to simultaneous approximation.
\begin{theorem}[\cite{EGL}, Proposition 3.1]\label{thm:dani}
The vector $\mathbf{z}=(z_1,\ldots,z_n)\in\mbb{R}^r\times\mbb{C}^s$ is badly approximable over $F$ if and only if the geodesic trajectory
$$
\Gamma\cdot\Omega_{\mathbf{z}}\cdot K\subseteq\Gamma\backslash(\mbb{H}^2)^r\times(\mbb{H}^3)^s
$$
is bounded, where
$$
\Omega_{\mathbf{z}}=\left\{\left(
\left(
\begin{array}{cc}
1&z_1\\
0&1\\
\end{array}
\right)
\left(
\begin{array}{cc}
e^{-t}&0\\
0&e^{t}\\
\end{array}
\right),\ldots,
\left(
\begin{array}{cc}
1&z_n\\
0&1\\
\end{array}
\right)
\left(
\begin{array}{cc}
e^{-t}&0\\
0&e^{t}\\
\end{array}
\right)
\right) : 0\leq t\in\mbb{R}
\right\}.
$$
\end{theorem}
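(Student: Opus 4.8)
The plan is to route everything through Mahler's compactness criterion (equivalently, reduction theory for $SL_2$ over $F$), which turns boundedness of the trajectory into a uniform lower bound on the shortest vectors of an associated family of lattices. Write $h_t$ for the element of $G$ defining $\Omega_{\mathbf{z}}$, i.e. the $n$-tuple $(u_{z_i}g_t)_i$ appearing in the display, and consider the lattices $\Lambda_t=h_t^{-1}\mc{O}_F^2$ sitting diagonally inside $(\mbb{R}^2)^r\times(\mbb{C}^2)^s\cong\mbb{R}^{2d}$, where $d=r+2s=[F:\mbb{Q}]$. Since each factor of $h_t$ has determinant one and $K$ acts by isometries in each factor, the assignment $\Gamma h_t K\mapsto\Lambda_t$ is well defined and the covolume of $\Lambda_t$ is independent of $t$. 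The first step is to record that, by Mahler's criterion together with the properness of the inclusion $\Gamma\backslash G\hookrightarrow SL_{2d}(\mbb{Z})\backslash SL_{2d}(\mbb{R})$, the trajectory $\Gamma\Omega_{\mathbf{z}}K$ is bounded if and only if there is a $\delta>0$ with $\|v\|\geq\delta$ for every nonzero $v\in\Lambda_t$ and every $t\geq 0$.

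The second step is a direct computation of these lattice vectors. A nonzero point of $\Lambda_t$ comes from $(p,q)\in\mc{O}_F^2\setminus\{0\}$, and its $i$-th archimedean component is $h_t^{-1}(p_i,q_i)^{\mathsf{T}}=\bigl(-e^{t}(q_iz_i-p_i),\,e^{-t}q_i\bigr)^{\mathsf{T}}$. Hence, up to the fixed constant coming from the equivalence of the Euclidean norm with the max-norm, the length of $v$ is comparable to $\max\{e^{t}A,\,e^{-t}B\}$, where I abbreviate $A=A(p,q)=\max_i|q_iz_i-p_i|$ and $B=B(p,q)=\max_i|q_i|$. The systole of $\Lambda_t$ is thus comparable to $\inf_{(p,q)\neq 0}\max\{e^{t}A,e^{-t}B\}$. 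I will also use that any nonzero $w\in\mc{O}_F$ satisfies $\max_i|w_i|\geq 1$, since the product of the archimedean absolute values of $w$ equals $|N_{F/\mbb{Q}}(w)|\geq 1$, so at least one of them is $\geq 1$; this disposes of the vectors with $q=0$, for which the length is $e^{t}\max_i|p_i|\geq 1$ once $t\geq 0$.

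With this dictionary both implications are short. If $\mathbf{z}$ is badly approximable, say $AB\geq C'$ for all $q\neq 0$, then for every $t$ the elementary inequality $\max\{X,Y\}\geq\sqrt{XY}$ gives $\max\{e^{t}A,e^{-t}B\}\geq\sqrt{AB}\geq\sqrt{C'}$; combined with the $q=0$ estimate this bounds the systole below uniformly in $t\geq 0$, so the trajectory is bounded. Conversely, if $\mathbf{z}$ is not badly approximable there are $(p_k,q_k)$ with $A_kB_k\to 0$; since $B_k\geq 1$ this forces $A_k\to 0$, so for large $k$ the balancing time $t_k=\tfrac12\log(B_k/A_k)$ is nonnegative, and there $\max\{e^{t_k}A_k,e^{-t_k}B_k\}=\sqrt{A_kB_k}\to 0$. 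Thus the systole of $\Lambda_{t_k}$ tends to zero and the trajectory is unbounded.

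The routine parts are the norm comparisons and the balancing optimization; the genuine content is the first step. The main obstacle I anticipate is justifying Mahler's criterion in this product (number-field) setting — specifically the properness of the embedding of $\Gamma\backslash G$ into the space of fixed-covolume lattices in $\mbb{R}^{2d}$, so that escape to infinity in $\Gamma\backslash G$ is detected by a vanishing systole rather than by some degeneration invisible to the full lattice space. Once that input is in hand (it is exactly what \cite{EGL} supplies), the equivalence follows from the computation above.
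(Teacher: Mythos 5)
Your proposal is correct and is essentially the paper's own approach: the paper deduces the statement from the $SL_2$ version of Mahler's compactness criterion over $F\otimes\mbb{R}$ (\cite{EGL}, Theorem 2.2) --- exactly the properness input you isolate as the genuine content --- and your computation of the lattice vectors $\bigl(\pm e^{t}(q_iz_i-p_i),\,e^{-t}q_i\bigr)$ together with the balancing time $t=\tfrac12\log(B/A)$ is the ``straight-forward'' derivation that the paper leaves to \cite{EGL}. The only differences are cosmetic: the paper phrases Mahler's criterion with row vectors $(q,p)\omega$ rather than columns $h_t^{-1}(p,q)^{\mathsf{T}}$ (the transpose/sign conventions wash out), and strictly you should treat separately the degenerate case $\mathbf{z}\in F$, where $A=0$ makes the balancing time undefined and instead the single vector with components $e^{-t}q_i$ shrinks along the trajectory.
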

This follows in a straight-forward fashion from the following version of Mahler's compactness criterion, stated here for $SL_2$.
\begin{theorem}[\cite{EGL}, Theorem 2.2]
A subset $\Gamma\cdot\Omega\subseteq\Gamma\backslash SL_2(F\otimes\mbb{R})$ is precompact if and only if there exists $\epsilon>0$ such that
$$
\max\{\max_i\{|z_i|\},\max_i\{|w_i|\}\}\geq\epsilon, \ (\mathbf{z},\mathbf{w})=(q,p)\omega,
$$
for all $(0,0)\neq(q,p)\in\mc{O}_F^2$ and $\omega\in\Omega$.  In other words, the two-dimensional $\mc{O}_F$-modules in $(F\otimes\mbb{R})^2$ spanned by the rows of $\omega\in\Omega$ do not contain arbitrarily short vectors.
\end{theorem}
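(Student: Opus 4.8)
The plan is to reduce the statement to the classical Mahler compactness criterion for lattices in a real vector space. Set $V=(F\otimes\mbb{R})^2$, a real vector space of dimension $N=2(r+2s)$, and let $\Lambda_0=\mc{O}_F^2\subseteq V$ be the standard lattice, embedded diagonally through the $\sigma_i$; its covolume is a fixed constant depending only on $\mathrm{disc}(F)$. The group $G=SL_2(F\otimes\mbb{R})$ acts on $V$ on the right by $(q,p)\mapsto(q,p)\omega$, and because each factor has determinant one this action is by volume-preserving $\mbb{R}$-linear maps. First I would form the orbit map $\Phi\colon G\to\{\text{lattices in }V\}$, $\omega\mapsto\Lambda_0\omega$, whose image is precisely the collection of rank-two $\mc{O}_F$-modules spanned by the rows of $\omega$.

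The next step is to check that $\Phi$ descends to $\Gamma\backslash G$ and is injective there. Since $\gamma\in\Gamma=SL_2(\mc{O}_F)$ is invertible over $\mc{O}_F$, right multiplication by $\gamma$ permutes $\mc{O}_F^2$, so $\Lambda_0\gamma=\Lambda_0$ and hence $\Lambda_0(\gamma\omega)=\Lambda_0\omega$; thus $\Phi$ factors through a map $\bar\Phi\colon\Gamma\backslash G\to\mc{L}$, where $\mc{L}$ is the space of lattices in $V\cong\mbb{R}^N$ of covolume $\mathrm{covol}(\Lambda_0)$. Conversely, if $\omega$ fixes $\Lambda_0$ then its rows lie in $\mc{O}_F^2$, forcing $\omega\in M_2(\mc{O}_F)$, and applying the same to $\omega^{-1}$ gives $\omega\in SL_2(\mc{O}_F)=\Gamma$; so the stabilizer of $\Lambda_0$ is exactly $\Gamma$ and $\bar\Phi$ is a continuous bijection onto the $G$-orbit of $\Lambda_0$.

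With the identification in hand, I would invoke the classical Mahler criterion: a family of lattices in $\mbb{R}^N$ of fixed covolume is precompact in $\mc{L}$ if and only if there is a uniform $\epsilon>0$ below which none of the lattices has a nonzero vector. The nonzero vectors of $\Lambda_0\omega$ are exactly the $(\mathbf{z},\mathbf{w})=(q,p)\omega$ with $(0,0)\neq(q,p)\in\mc{O}_F^2$, and since all norms on the finite-dimensional space $V$ are equivalent, the Euclidean length is bounded below precisely when the sup-norm $\max\{\max_i\{|z_i|\},\max_i\{|w_i|\}\}$ is, which is the stated condition.

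The one point requiring care, and the \emph{main obstacle}, is transferring precompactness between $\Gamma\backslash G$ and the space of lattices $\mc{L}$, i.e. showing $\bar\Phi$ is proper with closed image rather than merely a continuous injection. This is where the arithmetic enters: $G=(\mathrm{Res}_{F/\mbb{Q}}SL_2)(\mbb{R})$ is the real points of a semisimple $\mbb{Q}$-group with no nontrivial $\mbb{Q}$-rational characters, and under the closed embedding into $SL_N$ (taken with respect to a $\mbb{Z}$-basis of $\Lambda_0$) one has $\Gamma=G\cap SL_N(\mbb{Z})$. By Borel--Harish-Chandra reduction theory the orbit of $\Lambda_0$ is then closed and the induced map $\Gamma\backslash G\to SL_N(\mbb{Z})\backslash SL_N(\mbb{R})$ is proper, so a subset of $\Gamma\backslash G$ is precompact exactly when its image in $\mc{L}$ is. Granting this, the equivalence follows immediately from Mahler's criterion together with the equivalence of norms noted above.
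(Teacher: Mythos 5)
There is nothing in the paper to compare your argument against: the paper states this result purely as a citation (\cite{EGL}, Theorem 2.2) and never proves it, using it only as input for Theorem \ref{thm:dani}. Judged on its own merits, your proposal is correct and is the standard restriction-of-scalars argument: identify $\Gamma\backslash G$ with the $G$-orbit of the rank-$N$ $\mbb{Z}$-lattice $\mc{O}_F^2\subseteq(F\otimes\mbb{R})^2$, $N=2(r+2s)$ (your verification that the stabilizer of $\Lambda_0$ is exactly $\Gamma$ is what makes this identification injective), apply the classical Mahler criterion in the space of lattices of fixed covolume, and use equivalence of norms on $V$ to pass between Euclidean length and the sup-norm over the places. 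You also correctly isolate the one genuinely nontrivial step, which a careless argument would gloss over: precompactness must be transported \emph{back} from the lattice space to $\Gamma\backslash G$, which needs the map $\Gamma\backslash G\to SL_N(\mbb{Z})\backslash SL_N(\mbb{R})$ to be proper with closed image, not merely injective and continuous. The input you invoke for this --- for a $\mbb{Q}$-subgroup of $SL_N$ with no nontrivial $\mbb{Q}$-rational characters (here the semisimple group $\mathrm{Res}_{F/\mbb{Q}}SL_2$, with $\Gamma=G\cap SL_N(\mbb{Z})$ in the chosen basis), the arithmetic quotient maps properly into $SL_N(\mbb{Z})\backslash SL_N(\mbb{R})$ --- is indeed part of Borel--Harish-Chandra reduction theory and can be found in the paper's own references \cite{PR}, \cite{R}, and \cite{W}. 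It is also the same mechanism (Mahler's criterion combined with a closedness/properness statement for an arithmetically defined subgroup) that the paper itself uses in the proofs of Lemma \ref{lem:Q} and Lemma \ref{lem:plane}, so your proof is very much in the spirit of the surrounding text even though the paper defers this particular statement to \cite{EGL}.
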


An obvious way to obtain bounded trajectories in Theorem \ref{thm:dani} is to consider those asymptotic to compact totally geodesic subspaces of $\Gamma\backslash(\mbb{H}^2)^r\times(\mbb{H}^3)^s$, which we do in the next two sections.

\section*{Totally indefinite binary quadratic forms}
As above, let $F$ be a number field, $F\otimes\mbb{R}\cong\mbb{R}^r\times\mbb{C}^s$, $\Gamma=SL_2(\mc{O}_F)$, and let
$$
Q(x,y)=(x \ y)\left(
\begin{array}{cc}
A&B/2\\
B/2&C\\
\end{array}
\right)
\left(
\begin{array}{c}
x\\
y\\
\end{array}
\right)=Ax^2+Bxy+Cy^2, \ A,B,C\in F
$$
be an $F$-rational binary quadratic form with determinant $\Delta(Q)=AC-B^2/4$.  We say $Q$ is \textit{totally indefinite} if $\sigma(\Delta)<0$ for all real embeddings $\sigma:F\to\mbb{R}$ and that $Q$ is \textit{anisotropic} if it has no non-trivial zeros in $F^2$.  Note that $Q$ is anisotropic if and only if $-\Delta(Q)$ is not a square in $F$.  Let $Q_i$ be the form obtained by applying $\sigma_i$ to the coefficients of $Q$, denote by $Z_i(Q)$ the zero set of $Q_i$
$$
Z_i(Q)=\{[z:w]\in P^1(\mbb{C}) \text{ or } P^1(\mbb{R}) : Q_i(z,w)=0\},
$$
and let $Z(Q)=\prod_iZ_i(Q)$ (a finite set of cardinality $2^n$ for totally indefinite $Q$).

The group $\Gamma$ acts on binary quadratic forms by change of variable
$$
Q^g(x,y)=(g^tQg)(x,y)=Q(ax+by,cx+dy)
$$
and also on $P^1(\mbb{R})^r\times P^1(\mbb{C})^s$ diagonally by linear fractional transformations
$$
g\cdot([z_1:w_1],\ldots,[z_n:w_n])=([a_1z_1+b_1w_1:c_1z_1+d_1w_1],\ldots,[a_nz_n+b_nw_n:c_nz_n+d_nw_n]),
$$
where $a_i=\sigma_i(a)$ and similary for $b_i$, $c_i$, and $d_i$.  These actions are compatible in the sense that $g^{-1}\cdot Z(Q)=Z(Q^g)$.  Without further remark, we identify $\mbb{R}^r\times\mbb{C}^s$ with a subset of $P^1(\mbb{R})^r\times P^1(\mbb{C})^s$ via $(z_i)_i\mapsto([z_i:1])_i$.

The following is a generalization of the fact that quadratic irrationals are badly approximable over $\mbb{Q}$ ($r=1$, $s=0$), which is usually demonstrated via continued fractions (e.g. \cite{K}, Theorem 28).  We should note that these examples can also be deduced from Theorem 6.4 of \cite{B} (with $S$ the set of infinite places and $N=1$).
\begin{prop}\label{prop:quad}
Let $Q$ be a totally indefinite anisotropic $F$-rational binary quadratic form over a number field $F$.  Then any vector $\mathbf{z}\in Z(Q)$ is badly approximable over $F$.
\end{prop}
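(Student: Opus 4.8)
The plan is to invoke the Dani correspondence (Theorem \ref{thm:dani}) and prove that the trajectory $\Gamma\cdot\Omega_{\mathbf{z}}\cdot K$ is bounded by exhibiting a compact totally geodesic subspace to which it is asymptotic, as anticipated in the outline. Concretely, for each place $i$ the binary form $Q_i$ has two zeros in $P^1(\mbb{R})$ (at the real places, where total indefiniteness forces the discriminant $-4\sigma_i(\Delta)$ of $Q_i(\,\cdot\,,1)$ to be positive, giving two distinct real roots) or in $P^1(\mbb{C})$ (at the complex places); anisotropy guarantees $\Delta\neq 0$, so the two zeros are distinct at every place. I would let $L_i\subseteq\mbb{H}^2$ (resp. $\mbb{H}^3$) be the complete geodesic whose two boundary endpoints are the points of $Z_i(Q)$, and set $L=\prod_i L_i$, a totally geodesic subspace of $(\mbb{H}^2)^r\times(\mbb{H}^3)^s$.

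Since $\mathbf{z}\in Z(Q)$, each coordinate $z_i$ is one of the two endpoints of $L_i$. A direct computation shows that the $i$-th component of $\Omega_{\mathbf{z}}$ applied to the base point traces the geodesic ray $t\mapsto z_i+e^{-2t}j$ (resp. $z_i+e^{-2t}i$), which converges to the boundary point $z_i$ as $t\to\infty$; a ray converging to an endpoint of a complete geodesic stays within bounded distance of that geodesic. Hence $\Omega_{\mathbf{z}}\cdot K$ lies within bounded Hausdorff distance of $L$, and it suffices to show that the image of $L$ in $\Gamma\backslash(\mbb{H}^2)^r\times(\mbb{H}^3)^s$ is compact: a bounded neighborhood of a compact set has precompact image, so boundedness of the trajectory follows, whence $\mathbf{z}$ is badly approximable.

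For the compactness of the image of $L$ I would use the automorphs of $Q$. The stabilizer $\Gamma_Q=\{g\in\Gamma:Q^g=Q\}$ preserves $Z(Q)$ and hence each $L_i$, so it acts on $L$; more generally the special orthogonal group $H=SO(Q)$, a one-dimensional $F$-torus, acts on $L$ with $H(F\otimes\mbb{R})=\prod_i SO(Q_i)$ acting transitively along each $L_i$ (the identity component translates along the geodesic) with compact point-stabilizers. The crucial point is that $Q$ anisotropic is equivalent to $H$ being $F$-anisotropic: indeed $H$ is the norm-one torus of the quadratic algebra $F(\sqrt{-\Delta})$, which is a field precisely when $-\Delta$ is not a square in $F$. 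By Godement's compactness criterion, $F$-anisotropy of $H$ makes the arithmetic quotient $H(\mc{O}_F)\backslash H(F\otimes\mbb{R})$ compact; transporting this through the orbit map $H(F\otimes\mbb{R})\to L$, which is proper with compact fibers, yields that $\Gamma_Q\backslash L$, and a fortiori the image of $L$ in $\Gamma\backslash(\mbb{H}^2)^r\times(\mbb{H}^3)^s$, is compact.

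The main obstacle is exactly this last step: correctly translating the arithmetic hypothesis (anisotropy of $Q$) into $F$-anisotropy of the orthogonal group and invoking the Borel--Harish-Chandra/Godement criterion, then checking that cocompactness of the torus quotient genuinely descends to cocompactness of the action on the product of geodesics (compactness of the point-stabilizers is what makes the orbit map well-behaved). Everything else is routine hyperbolic geometry, with total indefiniteness used only to place each $L_i$ in the correct real or complex factor and anisotropy used to keep the two roots distinct. As an alternative to the geometric compactness argument, one could verify the bound directly through Mahler's criterion above, showing that the two-dimensional $\mc{O}_F$-modules spanned by the rows of a lift of $L$ admit no arbitrarily short vectors because $Q$ does not represent $0$ over $F$; this is essentially the elementary Liouville-type estimate promised in the introduction.
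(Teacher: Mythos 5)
Your proposal is correct and shares the paper's overall architecture: via the Dani correspondence (Theorem \ref{thm:dani}), reduce bad approximability to boundedness of $\Gamma\cdot\Omega_{\mathbf{z}}\cdot K$, observe that the trajectory stays within bounded distance of $L=\prod_i L_i$ because each $z_i$ is an ideal endpoint of $L_i$, and conclude from compactness of the image of $L$ in the quotient. Where you genuinely diverge is in the proof of that compactness (the paper's Lemma \ref{lem:Q}). The paper argues hands-on from its stated Mahler criterion: after clearing denominators, $Q$ takes values in $\mc{O}_F$ on $\mc{O}_F^2$, anisotropy makes these values nonzero, and discreteness of $\mc{O}_F$ in $F\otimes\mbb{R}$ bounds $\max_i\{|\sigma_i(Q^g(\alpha,\beta))|\}$ away from zero uniformly over $g\in SO(Q,F\otimes\mbb{R})$, so the lattices spanned by rows of such $g$ contain no arbitrarily short vectors; compactness in $\Gamma\backslash G/K$ then follows from properness of $\Gamma\backslash G\to\Gamma\backslash G/K$. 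You instead identify $SO(Q)$ as the norm-one torus of $F(\sqrt{-\Delta})$, note that anisotropy of $Q$ is precisely $F$-anisotropy of this torus, and invoke the Godement/Borel--Harish-Chandra criterion to get compactness of $SO(Q,\mc{O}_F)\backslash SO(Q,F\otimes\mbb{R})$, descending to $L$ along the proper orbit map with compact stabilizers. Both routes are sound: yours is shorter but leans on heavier machinery (plus the implicit restriction-of-scalars step needed to apply the criterion over $\mbb{Q}$, and the check that anisotropy survives it), whereas the paper's argument is self-contained given its Mahler statement --- in effect a direct verification of Godement's criterion in this special case, which is why \cite{PR}, \cite{R}, \cite{W} are cited only as background. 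Your closing alternative, verifying the Mahler bound directly from the fact that $Q$ does not represent zero, is exactly the Liouville-style argument the paper gives in its final section.
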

First we establish compactness of a subspace associated to $Q$.  There are many references with discussions of compactness for anisotropic arithmetic quotients, e.g. \cite{PR}, \cite{R}, and \cite{W}.
\begin{lemma}\label{lem:Q}
Let $Q$ be a totally indefinite $F$-rational anisotropic binary quadratic form, and let $L_i$ be the line in $\mbb{H}^2$ or $\mbb{H}^3$ with endpoints $Z(Q_i)$.  Then $\pi(\prod_iL_i)$ is compact in $\Gamma\backslash(\mbb{H}^2)^r\times(\mbb{H}^3)^s$, where $\pi:(\mbb{H}^2)^r\times(\mbb{H}^3)^s\to\Gamma\backslash(\mbb{H}^2)^r\times(\mbb{H}^3)^s$ is the quotient map.
\end{lemma}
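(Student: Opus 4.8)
The plan is to realize $\prod_i L_i$ as a single orbit of the real points of the special orthogonal group of $Q$, and to deduce compactness of its image in $\Gamma\backslash X$ (with $X=(\mbb{H}^2)^r\times(\mbb{H}^3)^s$) from the compactness of an \emph{anisotropic} arithmetic torus quotient. Write $T=SO(Q)=\{g\in SL_2 : Q^g=Q\}$, a one-dimensional $F$-torus which, via the change-of-variable action, sits inside $SL_2$ and fixes the zero set $Z(Q)$ pointwise-as-a-set. Since $Q$ is anisotropic, $-\Delta(Q)$ is not a square in $F$, so $E=F(\sqrt{-\Delta(Q)})$ is a quadratic field extension and $T\cong R^1_{E/F}\mbb{G}_m$ is the norm-one torus of $E/F$; in particular $T$ is $F$-anisotropic.

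Next I would identify the product of lines as a torus orbit. The total indefiniteness hypothesis guarantees that at each real place $Q_i$ has two distinct real roots and at each complex place $Q_i$ has two distinct roots in $P^1(\mbb{C})$, so that $Z_i(Q)$ consists of two distinct boundary points and $L_i$ is a genuine geodesic in $\mbb{H}^2$ or $\mbb{H}^3$. Over the archimedean completions one has $T(F\otimes\mbb{R})=\prod_i SO(Q_i)(F_i)$ with $F_i=\mbb{R}$ or $\mbb{C}$. For each $i$ the group $SO(Q_i)$ fixes the two endpoints of $L_i$ and its identity component translates along the axis, so $SO(Q_i)(F_i)\cdot x_i=L_i$ for any $x_i\in L_i$ (in the complex case the rotations about the axis lie in the stabilizer of $x_i$, so the orbit is still the one-dimensional axis $L_i$). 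Choosing a base point $x_0=(x_i)_i$ with $x_i\in L_i$, the product action gives $T(F\otimes\mbb{R})\cdot x_0=\prod_i L_i$.

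Now I would push this down. Because $T(\mc{O}_F)=\Gamma\cap T(F)$ stabilizes $Z(Q)$, it stabilizes $\prod_i L_i$, so the orbit map $t\mapsto\Gamma\cdot t\,x_0$ is invariant under left multiplication by $T(\mc{O}_F)$ and descends to a continuous map $T(\mc{O}_F)\backslash T(F\otimes\mbb{R})\to\Gamma\backslash X$ whose image is exactly $\pi(\prod_i L_i)$. Consequently it suffices to show that the arithmetic quotient $T(\mc{O}_F)\backslash T(F\otimes\mbb{R})$ is compact, since then $\pi(\prod_i L_i)$ is the continuous image of a compact space; note this route yields compactness directly, without a separate closedness argument.

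The compactness of $T(\mc{O}_F)\backslash T(F\otimes\mbb{R})$ is the heart of the matter, and is where I expect the real work to lie. This is the Godement/Borel--Harish-Chandra criterion: the arithmetic quotient of a reductive $F$-group is compact when the group is $F$-anisotropic, which holds for $T$ by the first paragraph (see \cite{PR}, \cite{R}, \cite{W}). Concretely it is Dirichlet's unit theorem applied to the norm-one units $\mc{O}_E^1\cong T(\mc{O}_F)$: total indefiniteness forces $E$ to split at every real place of $F$, so $T(F\otimes\mbb{R})\cong(\mbb{R}^\times)^r\times(\mbb{C}^\times)^s$ has non-compact part $\mbb{R}_{>0}^{\,n}$, while $\mc{O}_E^1$ has rank exactly $n=r+s$ and maps to a full lattice in it. The main subtlety to check carefully is precisely this matching of ranks, i.e. that the $n$ flow directions along the $L_i$ are all accounted for by the unit group so that no non-compact direction survives in the quotient; everything else in the argument is formal bookkeeping.
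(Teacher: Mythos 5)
Your proposal is correct, but it takes a genuinely different route from the paper's. The paper never identifies $SO(Q)$ as a norm-one torus: after reducing to integral coefficients, it applies Mahler's compactness criterion (already stated in the paper for the Dani correspondence) directly to
$$
SO(Q,\mc{O}_F)\backslash SO(Q,F\otimes\mbb{R})\subseteq\Gamma\backslash SL_2(F\otimes\mbb{R}),
$$
observing that for $g\in SO(Q,F\otimes\mbb{R})$ and $(0,0)\neq(\alpha,\beta)\in\mc{O}_F^2$ one has $Q^g(\alpha,\beta)=Q(\alpha,\beta)$, a \emph{nonzero} element of $\mc{O}_F$, hence bounded away from zero by discreteness; continuity of $Q$ then rules out short vectors in the lattices spanned by the rows of these matrices. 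It then passes to $\Gamma\backslash G/K$ via a closed-embedding argument and properness of $\Gamma\backslash G\to\Gamma\backslash G/K$. You instead use structure theory: $SO(Q)\cong R^1_{E/F}\mbb{G}_m$ with $E=F(\sqrt{-\Delta})$, which is $F$-anisotropic, so the arithmetic quotient is compact by Godement/Borel--Harish-Chandra, made concrete by Dirichlet's unit theorem (your rank count $(2n-1)-(n-1)=n$ is right, and discreteness plus Kronecker's theorem on the kernel of the logarithm map gives the full-lattice claim). Your descent is arguably cleaner --- exhibiting $\pi(\prod_iL_i)$ as the continuous image of the compact quotient $T(\mc{O}_F)\backslash T(F\otimes\mbb{R})$ avoids the paper's closedness/properness bookkeeping and its footnote about left versus right cosets. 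What the paper's route buys is uniformity and self-containedness: the same Mahler argument transfers verbatim to the Hermitian forms of Lemma \ref{lem:plane}, where the relevant group $SU(H)$ is a form of $SL_2$ rather than a torus, so your Dirichlet-specific argument would not carry over (only the abstract Godement criterion would, and for semisimple groups that is a deeper theorem than for tori). One small imprecision to note: $T(\mc{O}_F)=\Gamma\cap T(F)$ need not be literally isomorphic to $\mc{O}_E^1$; it is only commensurable with it, which is all the cocompactness argument requires.
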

\begin{proof}[Proof of Lemma \ref{lem:Q}]
Without loss of generality, suppose $Q$ has integral coefficients, $A,B,C\in\mc{O}_F$.  Compactness of
$$
SO(Q,\mc{O}_F)\backslash SO(Q,F\otimes\mbb{R})\subseteq\Gamma\backslash SL_2(F\otimes\mbb{R}),
$$
is a consequence of Mahler's compactness criterion as follows.  For $g\in SO(Q,F\otimes\mbb{R})$ and any $(0,0)\neq(\alpha,\beta)\in\mc{O}_F^2$, the quantity $\max_i\{|\sigma_i(Q^g(\alpha,\beta))|\}$ is bounded away from zero because
$$
0\neq Q^g(\alpha,\beta)=Q(\alpha,\beta)\in\mc{O}_F,
$$
and $\mc{O}_F$ is discrete in $F\otimes\mbb{R}$.  By Mahler's criterion and continuity of $Q$ viewed as a function $(\mbb{R}^r\times\mbb{C}^s)^2\to \mbb{R}^r\times\mbb{C}^s$, $SO(Q,\mc{O}_F)\backslash SO(Q,F\otimes\mbb{R})$ is precompact.\footnote{Due to choices of left/right actions, this technically shows that $SO(F\otimes\mbb{R})/SO(\mc{O}_F)$ has compact closure in $SL_2(F\otimes\mbb{R})/\Gamma$.  However the left and right coset spaces are homeomorphic.}  The inclusion above is a closed embedding, hence its image is compact.

To get the result in the locally symmetric space, note that
$$
\pi(\prod_i L_i)=\Gamma\cdot SO(Q,F\otimes\mbb{R})g\cdot K\subseteq\Gamma\backslash G/K,
$$
where $g\in SL_2(F\otimes\mbb{R})$ is any element such that $g K\in\prod_i L_i$, and that $\Gamma\backslash G\to\Gamma\backslash G/K$ is proper.
\end{proof}

\begin{proof}[Proof of Proposition \ref{prop:quad}]
Let $L_i$ be the line in $\mbb{H}^2$ or $\mbb{H}^3$ with ideal endpoints the zeros of $Q_i$ as in the lemma.  The stabilizer $Stab_{\Gamma}(\prod_iL_i)$ acts cocompactly on $\prod_iL_i$ by the lemma above.  For $\mathbf{z}\in Z(Q)$ the distance between the geodesic trajectory $\Omega_{\mathbf{z}}\cdot K$ and $\prod_iL_i$ is bounded as their projections are asympototic in each copy of $\mbb{H}^2$ or $\mbb{H}^3$.   In the quotient $\Gamma\backslash(\mbb{H}^2)^r\times(\mbb{H}^3)^s$, the image of $\prod_iL_i$ is compact and therefore $\Gamma\cdot\Omega_{\mathbf{z}}\cdot K$ is bounded in $\Gamma\backslash(\mbb{H}^2)^r\times(\mbb{H}^3)^s$.
\end{proof}
\section*{Totally indefinite binary Hermitian forms over CM fields}
Let $F$ be a CM field (an imaginary quadratic extension of a totally real field $E$) of degree $2n$ with ring of integers $\mc{O}_F$ and let $H$ be an $F$-rational binary Hermitian form
$$
H(z,w)=(\bar{z} \ \bar{w})\left(
\begin{array}{cc}
A&B\\
\overline{B}&C\\
\end{array}
\right)
\left(
\begin{array}{c}
z\\
w\\
\end{array}
\right)=Az\bar{z}+\overline{B}z\bar{w}+B\bar{z}w+Cw\bar{w}, \ A,C\in E, \ B\in F,
$$
where the overline is ``complex conjugation'' (the non-trivial automorphism of $F/E$).  Let $H_i$ be the form obtained by applying $\sigma_i$ to the coefficients of $H$ (noting that $\sigma_i$ commutes with complex conjugation).  We say $H$ is \textit{totally indefinite} if $\sigma_i(\Delta)<0$ for all $i$, where $\Delta=\det(H)=AC-B\overline{B}$.  We say $H$ is \textit{anisotropic} if $H(p,q)\neq0$ for $(p,q)\in F^2\setminus\{(0,0)\}$.  Note that $H$ is anisotropic if and only if $-\Delta$ is not a relative norm, $-\Delta\not\in N^F_E(F)$.  Denote by $Z_i(H)$ the zero set of $H_i$,
$$
Z_i(H)=\{([z:w]\in P^1(\mbb{C}) : H_i(z,w)=0\},
$$
a circle in $P^1(\mbb{C})$, and let $Z(H)=\prod_iZ_i(H)$.  When $H$ is totally indefinite, $Z(H)\cong(S^1)^s$ is an $s$-dimensional torus.

The group $\Gamma=SL_2(\mc{O}_F)$ acts on $H$ by change of variable
$$
H^ g(z,w)=(\bar{g}^tHg)(z,w)=H(az+bw,cz+dw), \ 
g=\left(
\begin{array}{cc}
a&b\\
c&d\\
\end{array}
\right)\in SL_2(\mc{O}_F),
$$
and also on $P^1(\mbb{C})^n$ diagonally by linear fractional transformations
$$
g\cdot([z_1:w_1],\ldots,[z_n:w_n])=([a_1z_1+b_1w_1:c_1z_1+d_1w_1],\ldots,[a_nz_n+b_nw_n:c_nz_n+d_nw_n]).
$$
where $a_i=\sigma_i(a)$ and similarly for $b_i$, $c_i$, and $d_i$.  These actions are compatible in the sense that $g^{-1}\cdot Z(H)=Z(H^g)$.  As before, we include $\mbb{C}^s\hookrightarrow P^1(\mbb{C})^s$ via $(z_i)_i\mapsto([z_i:1])_i$.

The following is a generalization of the fact that zeros of anisotropic binary Hermitian forms are badly approximable over imaginary quadratic fields ($r=0$, $s=1$).  As in the case of quadratic irrationals over $\mbb{Q}$, this can be demonstrated with continued fractions when the imaginary quadratic field is Euclidean, $F=\mbb{Q}(\sqrt{-d})$, $d=1,2,3,7,11$.  Details for the imaginary quadratic case can be found in \cite{Hi}.
\begin{prop}\label{prop:herm}
If $\mathbf{z}=(z_1,\ldots,z_n)\in\mbb{C}^n$ is a zero of the totally indefinite anisotropic $F$-rational binary Hermitian form $H$, i.e. $\mathbf{z}\in Z(H)$, then $\mathbf{z}$ is badly approximable.
\end{prop}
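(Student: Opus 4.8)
The plan is to imitate the proof of Proposition~\ref{prop:quad} essentially verbatim, replacing lines by planes and the orthogonal group by the unitary group. Since $F$ is totally imaginary we have $r=0$ and $s=n$, so the ambient space is $(\mbb{H}^3)^n$. For each $i$ let $P_i\subseteq\mbb{H}^3$ be the totally geodesic plane whose ideal boundary is the circle $Z_i(H)\subseteq P^1(\mbb{C})$; such a plane exists and is unique because $H$ totally indefinite forces each $H_i$ to have signature $(1,1)$. The heart of the argument is the Hermitian analogue of Lemma~\ref{lem:Q}, namely that $\pi(\prod_iP_i)$ is compact in $\Gamma\backslash(\mbb{H}^3)^n$. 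Granting this, the proposition follows exactly as before: if $\mathbf{z}\in Z(H)$ then $z_i\in\partial P_i$ for every $i$, so in each factor the geodesic ray $\Omega_{\mathbf{z}}\cdot K$ shares an ideal endpoint with $P_i$ and is therefore asymptotic to it; hence $\Omega_{\mathbf{z}}\cdot K$ is asymptotic to $\prod_iP_i$, its image $\Gamma\cdot\Omega_{\mathbf{z}}\cdot K$ is bounded, and Theorem~\ref{thm:dani} gives that $\mathbf{z}$ is badly approximable.

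To prove the compactness, let $SU(H)=\{g\in SL_2 : \bar g^tHg=H\}$ be the special unitary group of $H$, so that $SU(H,F\otimes\mbb{R})\cong\prod_iSU(H_i,\mbb{C})\cong SU(1,1)^n$ acts on $(\mbb{H}^3)^n$ preserving $\prod_iP_i$ and acting on each plane $P_i$ as its full orientation-preserving isometry group $\cong SL_2(\mbb{R})$. As in Lemma~\ref{lem:Q} I would first assume $A,C\in\mc{O}_E$ and $B\in\mc{O}_F$. The key computation is that for any $(0,0)\neq(p,q)\in\mc{O}_F^2$ the value $H(p,q)=Ap\bar p+\bar Bp\bar q+B\bar pq+Cq\bar q$ is a \emph{nonzero} element of $\mc{O}_E$: it is fixed by complex conjugation since $A,C\in E$, it is an algebraic integer since all coefficients and arguments are, and it is nonzero precisely because $H$ is anisotropic. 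Consequently $\max_i\{|\sigma_i(H(p,q))|\}\geq1$, because $N_{E/\mbb{Q}}$ of a nonzero element of $\mc{O}_E$ is a nonzero rational integer and the $\sigma_i|_E$ run over the $n$ real embeddings of $E$. Since $H^\omega=H$ for $\omega\in SU(H,F\otimes\mbb{R})$, the same reasoning as in Lemma~\ref{lem:Q} (and the same footnote about passing between homeomorphic left/right coset spaces) shows this lower bound forces the orbit vectors $(p,q)\omega$ to stay bounded away from zero; with the continuity of $H$ and Mahler's compactness criterion this gives that $SU(H,\mc{O}_F)\backslash SU(H,F\otimes\mbb{R})$ is precompact. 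As it embeds in $\Gamma\backslash SL_2(F\otimes\mbb{R})$ as a closed orbit (anisotropy), it is compact, and properness of $\Gamma\backslash G\to\Gamma\backslash G/K$ yields compactness of $\pi(\prod_iP_i)$.

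The step I expect to demand the most care is this compactness lemma, and within it the two facts inherited from the quadratic case: that $H(p,q)\in\mc{O}_E\setminus\{0\}$, which genuinely uses the Hermitian structure $A,C\in E$, $B\in F$ together with anisotropy (equivalently $-\Delta\notin N^F_E$), and that the anisotropic orbit $SU(H,\mc{O}_F)\backslash SU(H,F\otimes\mbb{R})$ sits as a \emph{closed} subset of the arithmetic quotient. For the latter I would invoke the standard theory of anisotropic arithmetic groups (\cite{PR}, \cite{R}, \cite{W}) rather than reprove closedness. Everything else---the identification $SU(1,1)\cong SL_2(\mbb{R})$, the resulting cocompact action of $SU(H,\mc{O}_F)$ on each $P_i$, and the assertion that a geodesic ray aimed at a boundary point of $P_i$ is asymptotic to $P_i$---is routine hyperbolic geometry, formally identical to the quadratic case already treated.
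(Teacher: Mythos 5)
Your proposal is correct and follows essentially the same route as the paper: the compactness of $\pi(\prod_i P_i)$ via Mahler's criterion (using that $H(p,q)$ is a nonzero element of $\mc{O}_E$, hence bounded away from zero at the infinite places), followed by the asymptotic-geodesic argument and Theorem \ref{thm:dani}. Your only deviations are cosmetic improvements: you state the integrality normalization correctly as $A,C\in\mc{O}_E$, $B\in\mc{O}_F$ (the paper's lemma has these swapped), and you replace the paper's appeal to discreteness of $\mc{O}_E$ in $F\otimes\mbb{R}$ with the explicit norm bound $\max_i\{|\sigma_i(H(p,q))|\}\geq1$.
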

As before, we first establish compactness of a subspace associated to $H$ (once again, cf. \cite{PR}, \cite{R}, or \cite{W}).
\begin{lemma}\label{lem:plane}
If $H$ is a totally indefinite anisotropic $F$-rational binary Hermitian form, then $\pi\left(\prod_iP_i\right)$ is compact in $\Gamma\backslash(\mbb{H}^3)^n$ where $P_i$ is the geodesic plane in the $i$th copy of $\mbb{H}^3$ whose boundary at infinity is the zero set $Z_i(H)$ and $\pi:(\mbb{H}^3)^n\to\Gamma\backslash(\mbb{H}^3)^n$ is the quotient map.
\end{lemma}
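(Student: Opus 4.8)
The plan is to mirror the proof of Lemma~\ref{lem:Q} exactly, replacing the orthogonal group by the unitary group. The key observation is that $\prod_i P_i$ is the image in $(\mbb{H}^3)^n$ of (a single $K$-orbit translated from) the special unitary group $SU(H, F\otimes\mbb{R})$, so compactness of $\pi(\prod_i P_i)$ will follow from compactness of
$$
SU(H,\mc{O}_F)\backslash SU(H,F\otimes\mbb{R})\subseteq\Gamma\backslash SL_2(F\otimes\mbb{R}),
$$
where $SU(H,\mc{O}_F)=\{g\in SL_2(\mc{O}_F):H^g=H\}$ and $SU(H,F\otimes\mbb{R})=\prod_i SU(H_i)$. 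As in the quadratic case, I would first reduce to the case where $H$ has integral coefficients $A,C\in\mc{O}_E$, $B\in\mc{O}_F$ (clearing denominators rescales $H$ by an element of $E$, which changes neither its zero set nor its symmetry group).

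The heart of the argument is Mahler's compactness criterion (the $SL_2$ version stated earlier). For $g\in SU(H,F\otimes\mbb{R})$ and any $(0,0)\neq(\alpha,\beta)\in\mc{O}_F^2$, I would evaluate $H^g(\alpha,\beta)=H(\alpha,\beta)$, which lies in $\mc{O}_E$ (since $H$ takes values in $E$ on $\mc{O}_F$-points, as $\overline{H(\alpha,\beta)}=H(\alpha,\beta)$) and is nonzero by anisotropy. Because $\mc{O}_E$ is discrete in $E\otimes\mbb{R}\cong\mbb{R}^n$, the quantity $\max_i\{|\sigma_i(H(\alpha,\beta))|\}$ is bounded away from $0$. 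Writing $(\mathbf{z},\mathbf{w})=(\alpha,\beta)\omega$ for $\omega$ ranging over the $SU(H_i)$-factors and using that $H_i$ is a fixed continuous (indeed bounded on compacta, coercive by total indefiniteness) function of its arguments, I can conclude from a uniform lower bound on $|H(\alpha,\beta)|$ that the row vectors $(\mathbf{z},\mathbf{w})$ cannot be arbitrarily short. This is precisely the Mahler criterion, giving precompactness of $SU(H,\mc{O}_F)\backslash SU(H,F\otimes\mbb{R})$; the inclusion into $\Gamma\backslash SL_2(F\otimes\mbb{R})$ is a closed embedding, so the image is in fact compact. Finally, since $\Gamma\backslash G\to\Gamma\backslash G/K$ is proper and $\pi(\prod_i P_i)=\Gamma\cdot SU(H,F\otimes\mbb{R})g\cdot K$ for any $g$ with $gK\in\prod_iP_i$, compactness descends to the locally symmetric space.

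I expect the main obstacle to be the step converting the lower bound on $|H(\alpha,\beta)|$ into a lower bound on $\max\{\max_i|z_i|,\max_i|w_i|\}$ for the Mahler criterion. In the quadratic case this is immediate from coercivity of the indefinite form on each factor, but here one must be slightly careful that $H_i$, being a Hermitian form of signature $(1,1)$ on $\mbb{C}^2$, does not vanish identically along the light cone in a way that would let short vectors slip through; total indefiniteness guarantees each $H_i$ is nondegenerate of mixed signature, so $|H_i(z,w)|$ large forces $\max\{|z|,|w|\}$ large uniformly over the group (the relevant bound being uniform because $SU(H_i)$ preserves $H_i$). Apart from this, the argument is a direct transcription of Lemma~\ref{lem:Q}, with $SO\rightsquigarrow SU$, $\Delta=AC-B^2/4\rightsquigarrow\Delta=AC-B\overline{B}$, and ``not a square'' $\rightsquigarrow$ ``not a relative norm.''
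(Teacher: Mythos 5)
Your proposal is correct and takes essentially the same route as the paper's proof: the same reduction to integral coefficients, the same application of Mahler's criterion via $H^g(\alpha,\beta)=H(\alpha,\beta)\in\mc{O}_E\setminus\{0\}$ together with discreteness of $\mc{O}_E$, and the same descent to $\Gamma\backslash G/K$ using properness of $\Gamma\backslash G\to\Gamma\backslash G/K$. The step you flag as the main obstacle is not actually one: the direction needed for Mahler (short row vectors force small form values, contradicting the uniform lower bound) follows from continuity and homogeneity of each $H_i$ alone, with no appeal to nondegeneracy or signature, which is why the paper dispatches it with the single phrase ``by Mahler's criterion and continuity of $H$.''
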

\begin{proof}[Proof of Lemma \ref{lem:plane}]
Without loss of generality, suppose $H$ has integral coefficients, $A,C\in\mc{O}_F$, $B\in\mc{O}_E$.  Compactness of
$$
SU(H,\mc{O}_F)\backslash SU(H,F\otimes\mbb{R})\subseteq\Gamma\backslash SL_2(F\otimes\mbb{R}),
$$
follows from Mahler's compactness criterion as follows.  For $g\in SU(H,F\otimes\mbb{R})$ and any $(0,0)\neq(\alpha,\beta)\in\mc{O}_F^2$, the quantity $\max_i\{|\sigma_i(H^g(\alpha,\beta))|\}$ is bounded away from zero because
$$
0\neq H^g(\alpha,\beta)=H(\alpha,\beta)\in\mc{O}_E,
$$
and $\mc{O}_E$ is discrete in $F\otimes\mbb{R}$.  By Mahler's criterion and continuity of $H$ viewed as a function $(\mbb{C}^s)^2\to\mbb{C}^s$, $SU(H,\mc{O}_F)\backslash SU(H,F\otimes\mbb{R})$ is precompact.\footnote{Once again, due to choices of left/right actions, this technically shows that $SU(H,F\otimes\mbb{R})/SU(H,\mc{O}_F)$ has compact closure in $SL_2(F\otimes\mbb{R})/\Gamma$.  However the left and right coset spaces are homeomorphic.}  The inclusion above is a closed embedding, hence its image is compact.

To get the result in the locally symmetric space, note that $\pi(\prod_i P_i)=\Gamma\cdot SU(H,F\otimes\mbb{R})g\cdot K\subseteq\Gamma\backslash G/K$ where $g\in SL_2(F\otimes\mbb{R})$ is any element such that $g K\in\prod_i P_i$, and that $\Gamma\backslash G\to\Gamma\backslash G/K$ is proper.
\end{proof}
\begin{proof}[Proof of Proposition \ref{prop:herm}]
The distance between the geodesic trajectory $\Omega_{\mathbf{z}}\cdot SU_2(\mbb{C})^n$ and $\prod_iP_i$ is bounded as their projections are asympototic in each of the $n$ copies of $\mbb{H}^3$.  In the quotient $\Gamma\backslash(\mbb{H}^3)^n$, the image of $\prod_iP_i$ is compact and therefore $\Gamma\cdot\Omega_{\mathbf{z}}\cdot SU_2(\mbb{C})^n$ is bounded in $\Gamma\backslash(\mbb{H}^3)^n$.
\end{proof}

It should be emphasized that the badly approximable product of circles $Z(H)\subseteq\mbb{C}^n$ contains non-quadratic algebraic vectors, parameterized as follows (cf. \cite{Hi}).
\begin{cor}\label{cor:alg}
Choose real algebraic numbers $\alpha_i\in[-2,2]$, $1\leq i\leq n$, $f\in F$, and a totally positive $e\in E\setminus N^F_E(F)$.  Then the vectors
$$
\mathbf{z}=(z_1,\ldots, z_n), \ z_i=f_i+\sqrt{e_i}\cdot\frac{\alpha_i\pm\sqrt{\alpha_i^2-4}}{2}
$$
are algebraic and badly approximable, where $f_i=\sigma_i(f)$, $e_i=\sigma_i(e)$.
\end{cor}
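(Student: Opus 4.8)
The plan is to exhibit a single totally indefinite anisotropic $F$-rational binary Hermitian form $H$ with $\mathbf{z}\in Z(H)$, so that Proposition \ref{prop:herm} applies directly; the algebraicity of the coordinates is then immediate.

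First I would observe that each factor $\frac{\alpha_i\pm\sqrt{\alpha_i^2-4}}{2}$ has modulus one. Since $\alpha_i\in[-2,2]$ we have $\alpha_i^2-4\leq0$, so writing $\sqrt{\alpha_i^2-4}=i\sqrt{4-\alpha_i^2}$ gives $\left|\frac{\alpha_i\pm\sqrt{\alpha_i^2-4}}{2}\right|^2=\frac{\alpha_i^2+(4-\alpha_i^2)}{4}=1$. Consequently $|z_i-f_i|=\sqrt{e_i}$ (recall $e$ is totally positive, so each $e_i>0$); that is, $z_i$ lies on the circle of radius $\sqrt{e_i}$ centred at $f_i$. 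The whole point of the strange-looking formula is thus that it parametrises a circle, which is exactly the type of set that arises as $Z_i(H)$.

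Next I would produce the Hermitian form realising these circles as its zero sets. Take
$$
H(z,w)=z\bar{z}-\bar{f}\,z\bar{w}-f\,\bar{z}w+(N^F_E(f)-e)\,w\bar{w},
$$
i.e.\ $A=1$, $B=-f$, $C=N^F_E(f)-e$, which indeed satisfy $A,C\in E$ and $B\in F$. Since $\sigma_i$ commutes with conjugation we have $\sigma_i(N^F_E(f))=|f_i|^2$, so a short computation gives $H_i(z,1)=|z-f_i|^2-e_i$; hence $Z_i(H)$ is exactly the circle $|z-f_i|=\sqrt{e_i}$ and $z_i\in Z_i(H)$ for every $i$, giving $\mathbf{z}\in Z(H)$. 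The determinant is $\Delta=AC-B\bar{B}=(N^F_E(f)-e)-f\bar{f}=-e$, so $\sigma_i(\Delta)=-e_i<0$ for all $i$ and $H$ is totally indefinite, while $-\Delta=e\notin N^F_E(F)$ by hypothesis makes $H$ anisotropic. Proposition \ref{prop:herm} then yields that $\mathbf{z}$ is badly approximable.

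Finally, algebraicity is routine: each $z_i=f_i+\sqrt{e_i}\cdot\frac{\alpha_i\pm\sqrt{\alpha_i^2-4}}{2}$ is built by field operations and square roots from the algebraic numbers $f_i=\sigma_i(f)$, $e_i=\sigma_i(e)$, and $\alpha_i$, hence is algebraic. I expect no genuine obstacle in this argument; the only step deserving care is the bookkeeping that the prescribed $A,B,C$ lie in $E,F,E$ and that $\Delta=-e$, since this single identity simultaneously secures total indefiniteness and anisotropy and thereby lets Proposition \ref{prop:herm} carry all the weight.
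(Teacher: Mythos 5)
Your proof is correct and is exactly the argument the paper intends: the corollary appears there as a parameterization of algebraic points on the product of circles $Z(H)$, and your explicit form with $A=1$, $B=-f$, $C=N^F_E(f)-e$ (hence $\Delta=-e$, giving total indefiniteness from total positivity of $e$ and anisotropy from $e\notin N^F_E(F)$) is precisely the reduction to Proposition \ref{prop:herm} that the paper leaves implicit. No gaps.
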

\section*{Elementary proofs}
Finally, we note that Proposition \ref{prop:quad} and \ref{prop:herm} have elementary proofs along the lines of Liouville's theorem.  Let $J$ be a totally indefinite, anisotropic, integral binary quadratic or Hermitian form.  For $\mathbf{z}\in Z(J)$ and $p/q\in F$ with $\max_i\{|z_i-p_i/q_i|\}\leq1$, we have
$$
|J_i(p_i/q_i,1)|=|J(z_i,1)-J(p_i/q_i,1)|\leq \kappa_i|z_i-p_i/q_i|
$$
for some constant $\kappa_i>0$ depending on $z_i$ and $J_i$ by the mean value theorem.  Multiplying by $|q_i|^2$ we have
$$
|J_i(p_i,q_i)|\leq\kappa_i|q_i(q_iz_i-p_i)|.
$$
Because $J$ is anisotropic and integral, for any $0<\lambda\leq\min\{ \max_i\{|a_i|\}: 0\neq a\in\mc{O}_F\}$ we have
$$
\max_i\{|J_i(p_i,q_i)|\}\geq\lambda.
$$
Hence for some $i_0$ we have
$$
\lambda\leq\kappa_{i_0}|q_{i_0}(q_{i_0}z_{i_0}-p_{i_0})|\leq\kappa_{i_0}\max_i\{|q_i|\}\max_i\{|q_iz_i-p_i|\},
$$
and $\mathbf{z}$ is badly approximable
$$
\max_i\{|q_i|\}\max_i\{|q_iz_i-p_i|\}\geq\lambda\kappa_{i_0}^{-1}.
$$

\end{document}